\documentclass{article}
\usepackage[utf8]{inputenc}
\usepackage[T2A]{fontenc}
\usepackage{amssymb}
\usepackage{amsmath}
\usepackage{amsthm}
\usepackage[mathscr]{eucal}
\usepackage{graphics}
\usepackage[dvips]{graphicx}
\usepackage{epsfig}

\usepackage{indentfirst,float,cite}
\usepackage{graphicx}

\oddsidemargin 0in
\evensidemargin 0in
\topmargin -0.5in
\textwidth 16.5truecm
\textheight 23truecm
\parindent=18pt

\title{Asymptotics of the Number of Endpoints of a Random Walk on a Certain Class of Directed Metric Graphs}

\author{V.\,L.~Chernyshev,
National Research University Higher School of Economics (HSE),\\
Moscow, Russian Federation,
vchern@mech.math.msu.su\\
A.\,A.~Tolchennikov,
A. Ishlinsky Institute for Problems in Mechanics RAS, \\
M.V. Lomonosov Moscow State University, \\
Moscow, Russian Federation,
tolchennikovaa@gmail.com}

\newcommand{\nc}{\newcommand}
\nc{\e}{\varepsilon}
\nc{\al}{\alpha}
\nc{\be}{\beta}
\nc{\ga}{\gamma}
\nc{\la}{\lambda}
\nc{\ph}{\varphi}
\nc{\pa}{\partial}

\theoremstyle{definition}

\newtheorem{theorem}{Theorem}

\numberwithin{equation}{section}

\begin{document}

\maketitle

\begin{abstract}
A certain class of directed metric graphs is considered. Asymptotics for a number of possible endpoints of a random walk at large times is found.\\
Keywords: counting functions, directed graphs, dynamical systems, Barnes---Bernoulli polynomials
\end{abstract}

\section{Introduction}

Let us consider a directed metric graph. Let us denote length of edge $e_k$ by $t_k$ and suppose that all lengths $\{ t_k \}_{k=1}^E$ are linearly independent over the set of rational numbers $\mathbb{Q}$.

One could consider a random walk (see \cite{RW}) on a directed metric graph (see, for example, \cite{KB} for references on metric graphs). The main unlikeness with the often considered case (see, for example, \cite{Lovasz}) is that the endpoint of a walk can be any point on an edge of a metric graph, and not only one of the vertices. Let one point start its move along the graph from a vertex (a source) at the initial moment of time. The passage time for each individual edge is fixed. In each vertex, the point with some non-zero probability selects one of the outgoing edges for further movement. Backward turns on the edges are prohibited in this model. Our aim is to analyze an asymptotics of the number $N(T)$ of possible endpoints of such random walk as time $T$ increases. The only assumption about the probabilities of choosing an edge is that it is non-zero for all edges, i.e. a situation of a general position. Such random walk could naturally arise while studying the dynamical systems on various networks.

The asymptotics for finite compact metric non-directed graphs was constructed in papers \cite{Chernyshev2017, RCD}. Moreover, in paper \cite{ChSh} the problem for wave propagation on singular manifolds was reduced to considering graphs with a finite number of vertices, but with infinite valences (but only a finite number of edges are involved at any finite time). So this article is the first to discuss the asymptotics of possible end-points on digraphs.

We will consider a finite directed graph $G=(V,E)$
of the following form: let there be an outgoing tree with vertices $v$ and a root
$s$, oriented in the direction from the root. The graph $G$ is obtained from this tree
by adding a finite number of edges leading from some vertices
$s$ to $s$. These graphs have an important property: there is only one route (that does not return to the root) from the root to any vertex.

\textbf{Definition} We call a  directed strongly connected metric graph a \textit{one-way Sperner} graph (see \cite{Salii} for details about Sperner graphs) if it consists of one-way tree started from the \textit{source} vertex $S$ and has a backward edges only leading to the source.

\begin{figure}[ht]
  \includegraphics[width=5cm]{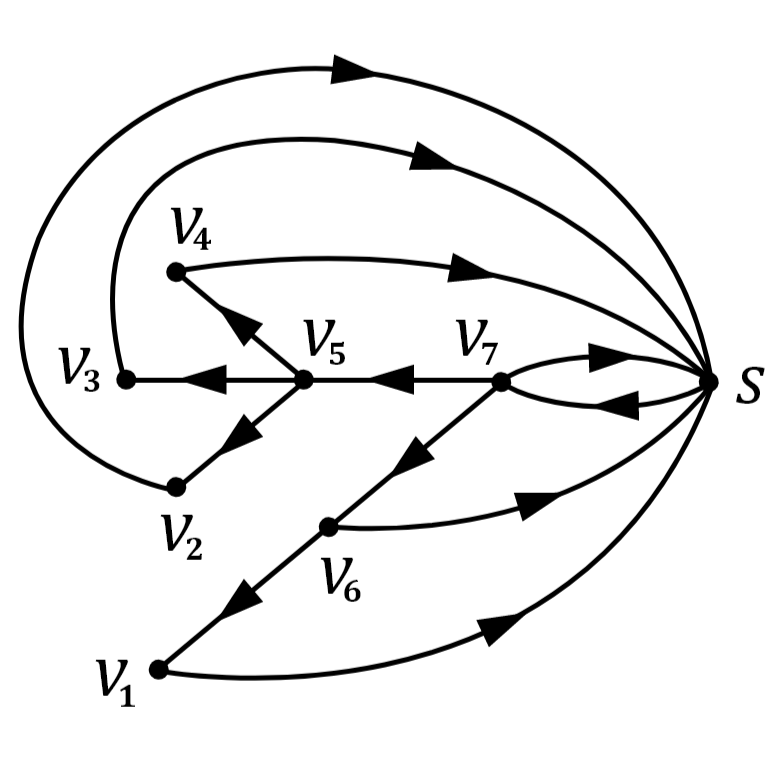}
  \caption{Example of strong digraph of discussed type.}
\end{figure}

Thus, in what follows we will consider only one-way Sperner graphs.

\section{Exact combinatorial formula for $N(T)$}

Let us introduce several notations.

For any subgraph $G'$ of the graph $G$ and the vertex $v$ we denote by
$\rho_{in}{(G',v)}$ and $\rho_{out}(G',v)$ the number of incoming edges incoming to $v$ and the number of outgoing from $v$ edges of the subgraph $G'$.

For any route $\mu$ we denote by $t(\mu)$ the time of this route, i.e. the sum of the times of passage of the edges included in $\mu$.

In our graph $G$ for any vertex $v\in V$ there is a unique simple chain $l_v$ (i.e., a route in which all vertices are pairwise distinct) from
$s$ to $v$ (in particular, $l_s = \emptyset$).

Let $c_1,\ldots,c_k$ be the elementary cycles of the graph $G$ (in our case $k = \rho_{in}(G,s) $). The set of all elementary cycles we denote by $C = \{c_1,\ldots, c_k\}$.

In what follows we will assume that the passage times of all elementary cycles and all simple chains of the form $l_v$ in the aggregate are linearly independent over $\mathbb{Q}$. This is a situation of general position.

Let us consider a linear inequality of the form $n_1 a_1 + \ldots + n_j a_j \le T$, where $n_1, \ldots, n_j \in \mathbb{N}$. By $\# \{n_1 a_1 + \ldots + n_j a_j \le T\}$ we will denote the number of natural solutions of this inequality.

\begin{theorem}
  \begin{equation}  N(T)=\sum_{v\in V} \sum_{I\subset
	  \{1,\ldots,k\} } \left( \rho_{out}(G, v) -
		\rho_{in}(G', v)
	    \right) \# \left\{ t(l_v) + \sum_{i\in I} n_i t(c_i) \leq T
	    \right\},
		\label{exact}
	  \end{equation}
  where the subgraph $G'=G'(v,I)$ of the graph $G$ is formed by the union of the edges
  of the simple chain $l_v$ and elementary cycles $\{c_i\}_{i\in I}$.

\end{theorem}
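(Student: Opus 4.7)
The plan is to enumerate the positions reachable at time $T$ by combinatorial data and then reorganise the count. By the defining property of a one-way Sperner graph, every walk from $s$ has the unique form: a sequence of elementary cycles $c_{j_1}, c_{j_2}, \ldots$ (each being the tree-path from $s$ to some vertex carrying a back edge, followed by that back edge), then the unique simple chain $l_v$ from $s$ to some vertex $v$, then a single outgoing edge $e$ of $v$ that is being partially traversed. Only the multiset $(n_j)_{j=1}^{k}$ of cycle multiplicities affects the elapsed time, so a position at time $T$ is uniquely labelled by the triple $(v,(n_j),e)$ with offset $T-t(l_v)-\sum_j n_j t(c_j)\in(0,t(e))$ along $e$, and the $\mathbb Q$-linear independence of the numbers $\{t(l_v)\}\cup\{t(c_j)\}$ guarantees that distinct triples give distinct positions. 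Hence for generic $T$,
\begin{equation*}
N(T)=\sum_{v\in V}\sum_{e\text{ outgoing from }v}\#\Bigl\{(n_j)\ge 0:t(l_v)+\sum_j n_j t(c_j)\le T<t(l_v)+\sum_j n_j t(c_j)+t(e)\Bigr\}.
\end{equation*}

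Writing the inner count as a difference gives $N(T)=\sum_v \rho_{out}(G,v)M_v(T)-\sum_v\sum_e M_{v\to e}(T)$, with $M_v(T):=\#\{(n_j)\ge 0:t(l_v)+\sum_j n_j t(c_j)\le T\}$ and $M_{v\to e}(T)$ the same count with $t(l_v)$ replaced by $t(l_v)+t(e)$. Each outgoing edge $e$ of $v$ is either a tree edge to a child $w$, in which case $t(l_v)+t(e)=t(l_w)$ and $M_{v\to e}=M_w$; or a back edge closing the elementary cycle $c_{j_e}$, in which case $t(l_v)+t(e)=t(c_{j_e})$ and the substitution $n_{j_e}\mapsto n_{j_e}+1$ identifies $M_{v\to e}(T)$ with $P_{j_e}(T):=\#\{(n_i)\ge 0:n_{j_e}\ge 1,\sum_i n_i t(c_i)\le T\}$. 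Since non-root vertices biject with tree edges and elementary cycles biject with back edges,
\begin{equation*}
N(T)=\rho_{out}(G,s)M_s(T)+\sum_{v\ne s}(\rho_{out}(G,v)-1)M_v(T)-\sum_{j=1}^{k}P_j(T).
\end{equation*}

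Finally, stratifying by the support $I=\{j:n_j\ge 1\}$ gives $M_v(T)=\sum_{I\subset\{1,\ldots,k\}}F_{v,I}(T)$, where $F_{v,I}(T)=\#\{(n_i)_{i\in I}\ge 1:t(l_v)+\sum_{i\in I}n_i t(c_i)\le T\}$ is the counting function appearing in \eqref{exact}, and $\sum_j P_j(T)=\sum_I|I|F_{s,I}(T)$. Substituting, the coefficient of $F_{v,I}(T)$ in $N(T)$ equals $\rho_{out}(G,s)-|I|$ when $v=s$ and $\rho_{out}(G,v)-1$ when $v\ne s$. To match \eqref{exact} one verifies $\rho_{in}(G'(s,I),s)=|I|$ (the $|I|$ chosen cycles each contribute one back edge ending at $s$) and $\rho_{in}(G'(v,I),v)=1$ for $v\ne s$ (the last edge of $l_v$ enters $v$, every cycle through $v$ uses this same edge by uniqueness of the simple chain to $v$, and no back edge ends at $v$). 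The main technical point is this last combinatorial identification, which relies crucially on the one-way Sperner structure; the remaining steps are routine inclusion--exclusion.
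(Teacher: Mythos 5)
Your argument is correct, and it reaches \eqref{exact} by a genuinely different route from the paper's. The paper works locally in time: it observes that $N(T)$ is piecewise constant, identifies the jump times as the passage times $t(l_v)+\sum_{i\in I}n_i t(c_i)$ (stratified disjointly by $(v,I)$ thanks to the linear-independence hypothesis), reads off the jump at such a time as $\rho_{out}(G,v)-\rho_{in}(G',v)$ --- the points created on the outgoing edges of $v$ minus the points, one per edge of $G'$ entering $v$, that simultaneously arrive at $v$ --- and then sums the jumps up to $T$. You instead count positions at a fixed generic $T$ directly as triples $(v,(n_j),e)$ with an offset constraint, and produce the coefficient by a telescoping rearrangement: the subtracted counts $M_{v\to e}$ are re-identified as $M_w$ for tree edges and as $P_{j}$ for back edges, after which stratification by the support $I$ gives exactly $\rho_{out}(G,s)-|I|$ at the source and $\rho_{out}(G,v)-1$ elsewhere. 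Your version has the merit of making every step a concrete bijection and of evaluating $\rho_{in}(G'(v,I),v)$ explicitly (the paper only does this later, inside the asymptotic computation), while the paper's jump argument is shorter and explains the coefficient conceptually as a net creation rate. The only caveat on your side is that the identity is established off the countable set of passage times; the paper's convention with $\leq$ also fixes the value at those exceptional instants, but this is immaterial for the counting function's asymptotics. Both proofs ultimately rest on the same structural facts about one-way Sperner graphs: every route from $s$ decomposes as a multiset of elementary cycles followed by the unique simple chain $l_v$, and $\mathbb{Q}$-linear independence makes the resulting stratification of passage times disjoint.
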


\begin{proof}
Let us consider an arbitrary route $\mu$ from $s$ to $v$. This route can be represented in the form of passing along the edges of several elementary cycles and along the edges of a simple chain $l_v$.
In that case the passage time $t(\mu)$ of the route $\mu$ has the form $t(l_v)+ \sum_{i\in I} n_i t(c_i) $, where $n_i\in \mathbb{N}$. Note that any time this form is the time of passage of a route from the $s$ to $v$. But, of course, different routes can have identical transit times. So, we have described the set $M$ of passage times for routes starting at the vertex $s$. It has the form:
$$
M= \sqcup_{v\in V} \sqcup_{I\subset \{1,\ldots,k\} } M_{v,I},
$$
where
$$
M_{v,I} =
\{ t(l_v) +
  \sum_{i\in I} n_i t(c_i)\ | \ n_i \in \mathbb{N} \ \forall i \in I
\}
$$
are the passage times for routes ending at the vertex $v$, and passing along all edges of the cycles $c_i (i\in I)$. The condition of linear independence over $\mathbb{Q}$ of the passage times
of elementary chains and elementary cycles ensure that the union is disjoint.

Now note that the function $N(T)$ is piecewise constant and jumps can occur only during the times $M$ of passing routes. The jump occurs at time $t\in M_{v,I}$ is equal to $\rho_{out}(G,v) - \rho_{in}(G',v)$ (where the subgraph $G'$ is formed by the edges of an elementary chain $l_v$ and cycles $c_i\ (i\in I)$), since for any $t\in M_{v,I}$ there is a route $\mu$ with $t(\mu)=t$, which ends along any given edge $G'$, entering the vertex $v$.

It remains to sum up the jumps over all passage times not exceeding $T$ and obtain the value of the function $N(T)$.

\end{proof}

\section{Asymptotic formula for $N(T)$ as $t\to \infty$}

\begin{theorem}
Let $G$ be a finite one-way Sperner metric graph. We consider a random walk on it with the initial vertex $s$. Then for the number of possible endpoints at the time $T$ has the following asymptotics:
\[
N(T)=\frac{T^{\beta -1}}{(\beta -1)!} \cdot \frac{\sum\limits_{e\in E}
t(e)}{\prod\limits_{i=1}^{\beta} t(c_j)}(1+o(T^{\beta-1})),
\]
where $\beta$ is a number of elementary cycles in $\Gamma$ and $T$ tends to infinity.
\end{theorem}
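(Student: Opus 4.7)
The approach is to insert the standard lattice-point asymptotic
\[
\#\{n\in\mathbb{N}^j : n_1 a_1+\cdots+n_j a_j\leq T\}=\frac{T^j}{j!\prod a_i}-\frac{T^{j-1}\sum a_i}{2(j-1)!\prod a_i}+O(T^{j-2})
\]
(Euler--Maclaurin on the $j$-simplex; the oscillatory remainder is genuinely $O(T^{j-2})$ for $\mathbb{Q}$-linearly independent $a_i$ by Weyl equidistribution) into the exact formula~(\ref{exact}) with $a_i=t(c_i)$ for $i\in I$ and $T$ replaced by $T-t(l_v)$. After expanding $(T-t(l_v))^{|I|}$, only the subsets $I$ with $|I|=\beta$ or $|I|=\beta-1$ can contribute to the coefficients of $T^\beta$ and $T^{\beta-1}$; every smaller $I$ contributes $O(T^{\beta-2})$.

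The central combinatorial identity is
\[
\Sigma(I):=\sum_{v\in V}\bigl(\rho_{out}(G,v)-\rho_{in}(G'(v,I),v)\bigr)=\beta-|I|.
\]
One reads it off from the structure of a one-way Sperner graph: the only edge of $G$ entering $v\neq s$ is the last edge of $l_v$, so $\rho_{in}(G'(v,I),v)=1$; and the edges of $G'(s,I)$ entering $s$ are exactly the $|I|$ back edges of the cycles in $I$; combined with $\sum_v\rho_{out}(G,v)=|E|=(|V|-1)+\beta$, this yields the formula. In particular $\Sigma(\{1,\dots,\beta\})=0$, so the naive leading $T^\beta$ term cancels exactly, and the $v$-independent sub-leading piece $-\tfrac12\sum_i t(c_i)$ arising for $|I|=\beta$ drops out for the same reason.

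To extract the $T^{\beta-1}$ coefficient I collect the two surviving contributions. The $|I|=\beta-1$ pieces sum to $\frac{1}{(\beta-1)!\,\prod_i t(c_i)}\sum_{j=1}^\beta t(c_j)$, using $\Sigma(I)=1$ for each such $I$ together with $\sum_{|I|=\beta-1}\bigl(\prod_{i\in I}t(c_i)\bigr)^{-1}=\sum_j t(c_j)/\prod_i t(c_i)$. The surviving $v$-dependent piece from $|I|=\beta$ produces $-\frac{1}{(\beta-1)!\,\prod_i t(c_i)}\sum_v\bigl(\rho_{out}(G,v)-\rho_{in}(G'(v,\{1,\dots,\beta\}),v)\bigr)\,t(l_v)$, and the decisive identity
\[
\sum_v\bigl(\rho_{out}(G,v)-\rho_{in}(G'(v,\{1,\dots,\beta\}),v)\bigr)\,t(l_v)=\sum_{j=1}^\beta t(c_j)-\sum_{e\in E}t(e)
\]
will be proved by telescoping on the tree: for a tree edge $e=(u,w)$ one has $t(l_u)=t(l_w)-t(e)$, and for a back edge $e=(u,s)$ closing the cycle $c_j$ one has $t(l_u)+t(e)=t(c_j)$. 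Substituting, the two copies of $\sum_j t(c_j)$ cancel and one is left precisely with $\frac{T^{\beta-1}}{(\beta-1)!}\cdot\frac{\sum_{e\in E}t(e)}{\prod_i t(c_i)}$.

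The main obstacle will be the telescoping identity above and a careful tracking of the oscillatory $O(T^{\beta-2})$ remainders under the rational-independence hypothesis; once those are in hand, the vanishing of the naive $T^\beta$ term via the clean formula $\Sigma(I)=\beta-|I|$ is what forces the true growth rate to be $T^{\beta-1}$ rather than $T^\beta$.
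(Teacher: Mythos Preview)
Your proposal is correct and follows the same overall architecture as the paper's proof: insert the two-term lattice-point asymptotic into formula~(\ref{exact}), restrict attention to $|I|\in\{\beta,\beta-1\}$, kill the $T^\beta$ coefficient by a handshaking argument, and reduce the $T^{\beta-1}$ coefficient to a combinatorial identity relating $\sum_e t(e)$, $\sum_j t(c_j)$, and the $t(l_v)$-weighted degree sum. Two presentational differences are worth noting. First, you package the degree bookkeeping into the single formula $\Sigma(I)=\beta-|I|$, whereas the paper treats $|I|=\beta$ and $|I|=\beta-1$ separately, the latter via an ad hoc splitting $N_2=N_2'+N_2''$ that isolates the correction at the root $s$; your formulation is cleaner and makes the role of $|I|$ transparent. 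Second, for the edge-length identity you argue by direct telescoping---rewriting $\sum_v(\rho_{out}-\rho_{in})\,t(l_v)=\sum_{e=(u,w)}\bigl(t(l_u)-t(l_w)\bigr)$ and evaluating each summand from your two observations---while the paper proves the same identity by induction, deleting back edges and then pendant tree edges one at a time and checking both sides drop by $t(e)$. Both arguments are elementary; yours is slightly more direct.
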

\begin{proof}
  We know (see \cite{RCD} for references on Barnes---Bernoilli polynomials) that the number of non-negative solutions to the inequality $n_1 a_1
... + n_m a_m\le T$ grows as a polynomial of degree $m$. Accordingly, to find the leading coefficient, we need to take inequalities in the formula
\eqref{exact}, in which either all cycles ($|I|=\beta$), are involved, or all cycles except one ($|I|=\beta-1$).

Let us consider the term $N_1(T)$ in the formula \eqref{exact} corresponding to $|I|=\beta$ (then
$G'=G$):
$$
N_1(T) = \sum\limits_{v\in V}\left[\rho_{out}(G, v) - \rho_{in}(G,
v)\right] \# \left\{t(l_v) + \sum_{i=1}^{\beta} n_i t(c_i)  \le T \right\}$$

Let us write the two leading terms in the expansion
$\# \left\{t(l_v) + \sum_{i=1}^{\beta} n_i t(c_i)  \le T \right\}$:

$$\# \left\{ \sum_{i=1}^{\beta} n_i t(c_i)  \leq \underbrace{T -
t(l_v)}_{\lambda} \right\} = \frac{1}{\prod_{i=1}^{\beta} t(c_i) } \Bigg(
\frac{\lambda^{\beta}}{\beta!} - \frac{1}{2} \sum_{i=1}^{\beta} t(c_i)
\frac{\lambda^{\beta-1}}{(\beta-1)!} + O(\lambda^{\beta-2}) \Bigg) =
$$
$$= \frac{1}{\beta! \prod_{i=1}^{\beta} t(c_i)} \left[ T^{\beta} - \beta\, \left( t(l_v)
  + \frac12 \sum_{i=1}^{\beta} t(c_i)
  \right)
  T^{\beta-1}
+ O(T^{\beta-2}) \right]
$$
Note that the coefficient at $T^{\beta}$  does not depend on $v$, therefore
$[T^{\beta}] N_1(T) = 0$ due to the fact that
$
\sum\limits_{v\in V}\left[\rho_{out}(G, v) - \rho_{in}(G,
v)\right] = 0,
$
i.e. by hand-shaking lemma.
Thus:
$$
[T^{\beta-1}]N_1(T) = - \frac{1}{(\beta-1)! \prod_{i=1}^{\beta} t(c_i)}
\sum\limits_{v\in V}\left[\rho_{out}(G, v) - \rho_{in}(G,
v)\right] t(l_v)
$$

Now consider the term $N_2(T)$ in the formula \eqref{exact},
corresponding to $I=\{1,\ldots,\beta\}\setminus \{ j \}$:
$$
N_2(T) = \sum_{v\in V}  \sum_{j=1,\ldots, \beta} \left( \rho_{out}(G, v) -
		\rho_{in}(G', v)
	  	    \right) \# \left\{ t(l_v) + \sum\limits_{i  \ne j} n_i
			t(c_i) \leq T
		  	    \right\},
			  $$
			  In this sum $\rho_{in}(G',v) =\rho_{in}(G,v)=1$ for
			  $v\ne s$ and
			  $\rho_{in}(G',s) =\rho_{in}(G,s)-1$.
			
			We get that $N_2(T) = N_2'(T) + N_2''(T)$, where
		  $$
		  N_2'(T) = \sum_{v\in V}   \left( \rho_{out}(G, v) -
				\rho_{in}(G, v)
			  	    \right)
				  		\left(
						\sum_{j=1}^{\beta}
					  		\# \left\{  t(l_v) + \sum\limits_{i  \ne
							j} n_i t(c_i) \leq T
						  	    \right\}
							  	    \right)	
								  		,
									  $$
									$$
								  N_2''(T) =
								  \sum_{j=1}^{\beta}
										\# \left\{   \sum\limits_{i
										\ne j} n_i t(c_i) \leq T
									  	    \right\}
										  $$
										  We have: $[T^{\beta-1}] N_2'(T) =
										0$, therefore
									  $$
									  [T^{\beta-1}] N_2(T) =
									  \frac{\sum_{i=1}^{\beta}
									t(c_i)}{(\beta-1)! \prod_{i=1}^{\beta}
								  t(c_i)}
								$$
							
							  It remains to prove that
							$$
							\sum_{e\in E} t(e) = \sum_{i=1}^{\beta} t(c_{\beta})
						  -
						  \sum\limits_{v\in V}\left[\rho_{out}(G, v) -
							\rho_{in}(G,
						  v)\right] t(l_v)
						$$
					
					  Let us show that by removing the edges, the expressions on the left and right decrease by the same amount.
				First, we remove the edge $e=(v,s)$. The expression on the left decreased by
			   $t(e)$, the expression on the right decreased by $t(l) + t(e) -
			 t(l)$, where
		    the simple chain $l$ leads from $s$ to $v$.
		  We continue until $\beta$ becomes equal to zero, i.e.
		$G$ will become a directed tree.
%

		We remove the hanging edge $e$:
	  the expression on the left decreased by $t(e)$, and the expression on the right decreased by $-t(l)+
  t(l)+t(e)$, where $l$ --- is a simple chain from $s$ to the beginning of the edge $e$.

In the end, we get a tree with one edge and this equality is true for it.
\end{proof}

\subsection{Discussion and Examples}

Earlier (see \cite{ChSh}) a formula was obtained for the leading asymptotic coefficient of $N(T)$ in the case of an ordinary (undirected) graph.
Here is the formula:

$$
N(T)=\frac{T^{E-1}}{2^{V-2}(E-1)!} \cdot \frac{\sum\limits_{i=1}^E q_j}{\prod\limits_{i=1}^E q_j}(1+o(T^{E-1})),
$$
where $E$ is the number of edges in the graph, $V$ is the number of vertices, and $q_j$ is the lengths of the edges of the undirected graph.\\

There is no direct analogue of considered by us class of graphs in the undirected case. But we can consider its special case, namely the class of graphs, which are oriented disjoint `` loops '' (cycles with two vertices), connected at the source. Such a graph corresponds to an undirected star graph. In this case, we can assume that the sum of the lengths of the edges of the directed graph ${\sum\limits_{i=1}^E t_j}$ is equal to $2{\sum\limits_{i=1}^E q_j}$, and $t(c_j)=2q_j$. The number of vertices $V$ is equal to $E+1$, then, substituting into the theorem proved in this article, we get:

$$
N(T)=\frac{T^{E-1}}{(E-1)!} \cdot \frac{2\sum\limits_{i=1}^E q_j}{2^E\prod\limits_{i=1}^E q_j}(1+o(T^{E-1})),
$$
which, after shortening, gives the formula for the undirected case.

We could notice that the presented formula for the leading term of $N(t)$ is valid not only for graphs from the class of digraphs that we considered.

\begin{figure}[ht]
  \includegraphics[width=5cm]{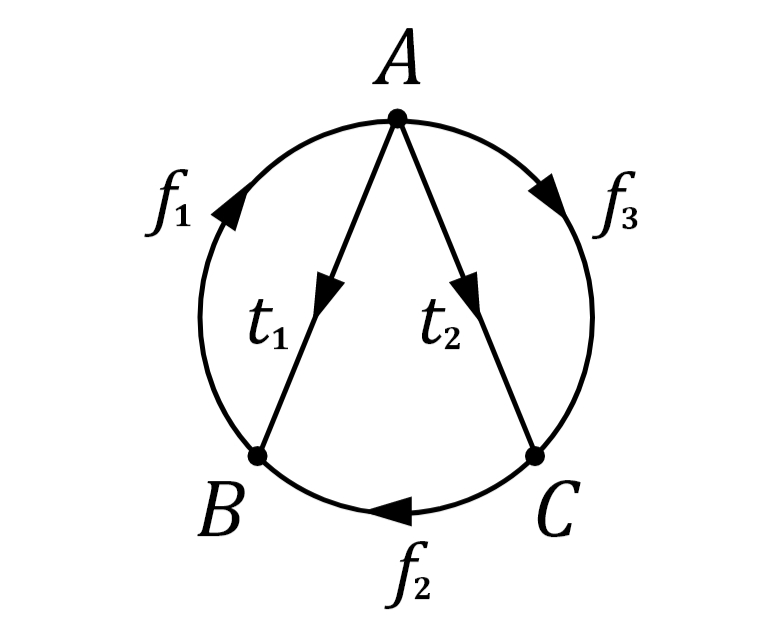}
\end{figure}
Let us consider a graph in the form of a circle with two points on it and two directed chords.\\

The starting vertex is the vertex $A=s$. Let us find the times of possible routes.\\

\begin{enumerate}
  \item Times of routes from $A$ to $A$:
  $$n_1(t_1 + f_1) + n_2(t_2 + f_2 + f_1) + n_3(f_3 + f_2 + f1), \quad n_1, n_2, n_3 \geq 0$$
  At these times $N(T)$ increases by 2.

  \item Times of routes from $A$ to $B$:
  $$n_1(t_1 + f_1) + n_2(t_2 + f_2 + f_1) + n_3(f_3 + f_2 + f_1) - f_1,$$ where not all $n_i$ are zero.\\
  The condition under which at these times the routes end at $t_1$ and $f_2$:
  $$
  \begin{cases}
  n_1 > 0 \\
  \left[
  \begin{array}{c}
       n_2 > 0\\
       n_3 > 0\\
  \end{array}
  \right.
  \end{cases}
  $$
  Here $N(T)$ decreases by 1.

  \item Times of routes from $A$ to $C$:
  $$n_1(t_1 + f_1) + n_2(t_2 + f_2 + f_1) + n_3(f_3 + f_2 + f_1) - f_1 - f_2,$$ where $n_2$ or $n_3 \neq 0$.\\
  The condition under which at these times the routes end at $t_2$ and $f_3$:
  $$
  \begin{cases}
  n_2 > 0 \\
  n_3 > 0 \\
  \end{cases}
  $$
\end{enumerate}

Thus, we get that $N(T)$ is a quadratic function of $T$:

$$N(T) = 2 \#
\left\{
    n_1(t_1 + f_1) + n_2(t_2 + f_2 + f_1) + n_3(f_3 + f_2 + f_1) \leq T, \quad n_1, n_2, n_3 \geq 0
\right\}
-
$$
$$
- \#
\left\{
    n_1(t_1 + f_1) + n_2(t_2 + f_2 + f_1) + n_3(f_3 + f_2 + f_1) - f_1 \leq T, \quad n_1 > 0,
    \left[
    \begin{array}{c}
       n_2 > 0\\
       n_3 > 0\\
    \end{array}
    \right.
\right\}
-
$$
$$
- \#
\left\{
    n_1(t_1 + f_1) + n_2(t_2 + f_2 + f_1) + n_3(f_3 + f_2 + f_1) - f_3 - f_2 \leq T, \quad n_1 \geq 0, n_2 > 0, n_3 > 0
\right\}
=
$$
$$
= T^2 \cdot \frac{f_1 + f_2 + f_3 + t_1 + t_2}{2(f_1 + f_2 + f_3)(f_1 + t_1)(f_1 + f_2 + t_2)} + \underline{\underline{O}}(T)
$$

\subsection*{Conclusions} We found the asymptotics for the number of possible endpoints of a random walk at large times in the case of the certain class of directed graph. Examples show that the formula for the leading coefficient still holds for digraphs without the uniqueness of the path from source to every vertex. So to find a class of metric graph for which the derived formula could be correct could be the aim of further research.

\subsection*{Acknowledgments}
The authors are grateful to V.\,E.~Nazaikinskii,  A.\,I.~Shafarevich for support and useful discussions.
The work of A. Tolchennikov on Section 2 was supported by grant 16-11-10069 of the Russian Science Foundation. The work of V. Chernyshev on Section 3 was supported by the RFBR grant 20-07-01103 a.

\bibliographystyle{plain}

\end{document}